\newtheorem{thm}{Theorem}
\newtheorem{lemma}{Lemma}
\newtheorem{rmk}{Remark}
\newtheorem{assumption}{Assumption}
\DeclareMathOperator{\diag}{dg}
\DeclareMathOperator{\rank}{rank}
\DeclareMathOperator{\trace}{Tr}
\DeclareMathOperator{\subjectto}{s.to}
\newtheorem{cor}{Corollary}
\newcommand \mcC{\mathcal{C}}
\newcommand \mcE{\mathcal{E}}
\newcommand \mcS{\mathcal{S}}
\newcommand \mcG{\mathcal{G}}
\newcommand \hmcG{\hat{\mathcal{G}}}
\newcommand \uX{\underline{X}}
\newcommand \oX{\overline{X}}
\newcommand \mcV{\mathcal{V}}
\newcommand \hmcE{\hat{\mathcal{E}}}
\newcommand \mcZ{\mathcal{Z}}
\newcommand \tL{\tilde{L}}
\newcommand \tW{\tilde{W}}
\newcommand \mcA{\mathcal{A}}
\newcommand \mcB{\mathcal{B}}
\newcommand \mcN{\mathcal{N}}
\def\BibTeX{{\rm B\kern-.05em{\sc i\kern-.025em b}\kern-.08em
    T\kern-.1667em\lower.7ex\hbox{E}\kern-.125emX}}
 \title{Optimal Design of Power Grid Topologies for Improved Stability}	
\begin{document}
\title{Efficient Topology Design Algorithms for Power Grid Stability}

\author{Siddharth Bhela, Harsha Nagarajan, Deepjyoti Deka, and Vassilis Kekatos
}
	
	\maketitle
	\thispagestyle{empty}
	\pagestyle{empty}

\begin{abstract}
		The dynamic response of power grids to small disturbances influences their overall stability. This paper examines the effect of network topology on the linearized time-invariant dynamics of electric power systems. The proposed framework utilizes ${\cal H}_2$-norm based stability metrics to study the optimal placement of lines on existing networks as well as the topology design of new networks. The design task is first posed as an NP-hard mixed-integer nonlinear program (MINLP) that is exactly reformulated as a mixed-integer linear program (MILP) using McCormick linearization. To improve computation time, graph-theoretic properties are exploited to derive valid inequalities (cuts) and tighten bounds on the continuous optimization variables. Moreover, a cutting plane generation procedure is put forth that is able to interject the MILP solver and augment additional constraints to the problem on-the-fly. The efficacy of our approach in designing optimal grid topologies is demonstrated through numerical tests on the IEEE 39-bus network.
\end{abstract}

	\section{Introduction} \label{sec:introduction}
	Widespread adoption of new grid technologies is continuously changing the face of our modern electricity networks. Increased penetration of renewable energy sources and changing load patterns has lead to higher volatility in power networks~\cite{gayme}. The stochastic nature of renewables and active loads is likely to produce recurring disturbances that will require careful planning and design of power networks with stability in mind \cite{gayme}. Additionally, the loss of rotational inertia in systems with high percentage of renewables will significantly reduce the capability of power grids to handle such disturbances \cite{ulbig2014impact}.
	\allowdisplaybreaks
	
	While several works have explored the placement of virtual inertia to improve the dynamic performance of power networks \cite{Poolla17}, it is not well understood how grid topology affects transient stability \cite{GuoLow18}. 
	Recent works have shown that the grid Laplacian matrix eigenvalues allows us to quantify the effect of grid topology on the power network \cite{GuoLow18}. Our work is further motivated by the claim that grid robustness against low frequency disturbances is determined by the connectivity of the network \cite{GuoLow18}. Past studies have looked at designing grid topologies for specific goals such as reduction of transient line losses \cite{gayme}, improvement in feedback control \cite{mallada}, and coherence-based network design \cite{SDPvoltage}. Topologies can also be designed to pursue other objectives such as improving reliability, minimizing investment and operating costs, reducing line losses, and managing network congestion; see \cite{MM19}, \cite{KH2011} and references therein.
	
	
	This paper investigates the effect of topology on power system dynamics. For a variety of $\mathcal{H}_2$-norm based stability metrics, such as reduction of line losses, fast damping of oscillations, and network synchronization, our previous works \cite{Deka18ACC}, \cite{BDHK18} presented methods that can be used to optimally design power grid topologies. In \cite{BDHK18}, we presented reformulations of the combinatorial topology design task that allowed us to solve the problem to optimality, albeit with forbiddingly slow run-times. This work significantly improves the computational performance of the previous formulation using the following key contributions discussed in Section~\ref{sec:bounds}: \emph{i)} present methods to derive tight bounds on the continuous optimization variables; \emph{ii)} generate \emph{a-priori} valid cuts based on graph-theoretic properties; and \emph{iii)} showcase an eigenvector-based cut-generation procedure that is able to interject the solver and add constraints on-the-fly. To explore conditions under which a greedy scheme would perform well, we also present new conditions for supermodularity in Section~\ref{sec:subm}.
	
	
	
	\textbf{Notation:} Column vectors (matrices) are denoted by lower (upper) case letters, and sets by calligraphic symbols. The cardinality of set $\mathcal{X}$ is denoted by $|\mathcal{X}|$ and $\emptyset$ is an empty set. Given a real-valued sequence $\{x_1, \ldots, x_N\}$, $x$ is the $N \times 1$ vector obtained by stacking the entries $x_i$ and $\diag(\{x_i\})$ is the corresponding diagonal matrix. The operator $(\cdot)^{\top}$ stands for transposition. The $N \times N$ identity matrix is represented by $I_N$. The canonical vector $e_i$ has a $1$ at the $i$-th entry and is $0$ elsewhere. The time derivative of $\theta$ is denoted by $\dot{\theta}:=\frac{\delta \theta}{\delta t}$. $M \succeq 0$ implies that $M$ is positive semi-definite, and $X_{\mathcal{AB}}$ denotes the matrix obtained from $X$ upon sampling the rows and columns indexed respectively by sets $\mathcal{A}$ and $\mathcal{B}$.
	
	\section{Grid Modeling and Stability Metrics} \label{sec:formulation}

	An electric power network with $N+1$ nodes can be represented by a graph ${\cal G }= ({\cal V}, \cal{E})$, where ${\cal V}:= \{1, 2, \ldots,N+1\}$ corresponds to nodes and edges in ${\mcE}$ correspond to undirected lines. Let node $i=1$ be the reference, and collect the remaining nodes in set $\mathcal{V}_r:=\mathcal{V}\backslash\{1\}$.  The susceptance of line $(i,j)\in \mcE$ connecting nodes $i$ and $j\in \cal V$ is denoted by $b_{ij}>0$, while its reactance is denoted by $x_{ij}:=b_{ij}^{-1}$ under a lossless line model. Then, $L$ represents the $(N+1) \times (N+1)$ susceptance Laplacian matrix of graph $\cal G$ and is defined as 
	$L_{ij}:= 
\sum_{(i,j)\in {\mcE}}b_{ij},~\text{if $i=j$}; L_{ij}:=-b_{ij},~\text{if $(i,j) \in {\mcE}$};~\text{and}~0 ~\text{otherwise}$.
    
    We consider a small-signal disturbance setup where 
	each node $i \in \mathcal{V}$ is associated with a synchronous machine's rotor angle $\theta_i$, frequency $\omega_i= \dot{\theta_i}$, inertia constant $M_i $, and damping coefficient~$D_i$; see~\cite{kundur}. If a node $i$ hosts an ensemble of devices, the parameters $M_i$ and $D_i$ represent lumped characterizations of the collective behavior of the hosted devices~\cite{Poolla17}. Without loss of generality, the quantities $(\theta_i,\omega_i)$ will henceforth refer to deviations of nodal phase angles and frequencies from their steady-state values. Using these definitions, the state-space representation of the linearized power grid dynamics can be expressed as~\cite{kundur}
	\begin{align} \label{swingmatrixfull}
	\begin{bmatrix} \dot{{\theta}} \\ \dot{{\omega}}  \end{bmatrix}   = \underbrace{\begin{bmatrix} 0 & {I}_N\\ -{M}^{-1}L  & -{M}^{-1}{D} \end{bmatrix}}_{A:=} \begin{bmatrix} {\theta} \\ {\omega}  \end{bmatrix} + \underbrace{\begin{bmatrix} 0\\ {M}^{-1}\end{bmatrix}}_{B:=} {u}
	\end{align}
	where ${M}:=\diag(\{M_i\})$ and ${D}:=\diag(\{D_i\})$ are the $(N+1) \times (N+1)$ matrices collecting  the inertia and damping constants across all nodes; the $(N+1)$-long vectors ${\theta}$, ${\omega}$, and ${u}$ stack respectively the phase angles, frequencies, and power disturbances at each node. The subsequent analysis relies on the ensuing assumption, which has been justified in several existing works \cite{BDHK18,gayme,john}.
	
	
	\begin{assumption}\label{as:A1}
		The constants $(M_i, D_i)$ are positive and $D_i = c$ for all $i\in \cal V$ (identical damping).
	\end{assumption}
	
	Given the state-space model in \eqref{swingmatrixfull}, our goal is to design power network topologies that minimize the expected steady-state value of a generalized control objective combining angle deviations and frequency excursions~\cite{Deka18ACC}, \cite{Poolla17}
	\begin{equation}\label{loss}
	f(t) := \sum_{i\in\mathcal{V}}\left[\sum_{j\neq i} w_{ij}(\theta_i(t)-\theta_j(t))^2 + s_{i}\omega_i^2(t)\right]
	\end{equation}
	for given non-negative weights $\{w_{ij}\}$ and $\{s_i\}$. The weights $w_{ij}$ induce a connected weighted graph $\mcG_w$ that may be different from $\cal G$. Let $W$ be the Laplacian matrix of graph $\mcG_w$ and define $S:=\diag(\{s_i\})$. Then, it is easy to see that $f(t) = \|y(t)\|_2^2$, where
	\begin{align} \label{output}
	y(t) :=  \underbrace{\begin{bmatrix}W^{1/2}  ~&0 \\0 ~&S^{1/2}\end{bmatrix}}_{C:=}  \begin{bmatrix}\theta(t)\\ \omega(t)\end{bmatrix}.
	\end{align}
	
	The importance of the generalized control objective in \eqref{loss} is that by varying~$(W,S)$, one can capture different stability metrics and study them under a unified framework~\cite{Deka18ACC}. Note that the network coherence metric considered in the numerical tests corresponds to the case of $W = I_{N+1}- \frac{1}{N+1}1_{N+1}1_{N+1}^{\top}$ and $S =0$~\cite{Deka18ACC}. 
	
	The expected steady-state value of $f(t)$ can be interpreted as the squared ${\cal H}_2$-norm of the linear time-invariant (LTI) system described by \eqref{swingmatrixfull} and \eqref{output}. This system will be compactly denoted by $H:=(A,B,C)$. Leveraging this link, the generalized control objective can be expressed as
	\begin{equation} \label{obsh2}
	\|H\|^2_{{\cal H}_2} = \trace(B^{\top}QB)
	\end{equation}
	where $Q \succeq 0$ is the observability Gramian matrix of the LTI system $H$, and can be computed as the solution to the Lyapunov equation~\cite[Ch.~5]{boyd1991linear} 
	\begin{equation}\label{observability}
	A^{\top}Q+QA = -C^{\top}C. 
	\end{equation}
	The ensuing sections select network topologies that minimize the stability metric of \eqref{obsh2}.
	
	\section{Optimal Design of Grid Topologies} \label{sec:top}

	Consider a connected graph $\hat{\mathcal{G}}=(\mcV, \hat{\mcE})$, where $\hat{\mathcal{E}}$ is the set of all candidate lines. The goal is to find a subset of lines ${\mcE} \subseteq \hat{\mathcal{E}}$, so that the resultant power network minimizes the stability objective in \eqref{obsh2}. Because adding lines can be costly and utilities have limited budgets, we further impose the constraint that $|\mathcal{E}|\leq K$ with $K\geq N$. By setting $K=N$, a radial topology can be enforced. The topology design task can be now posed as~\cite{BDHK18}
	\begin{subequations}\label{eq:P1}
		\begin{align}
		\arg \min_{\mathcal{E} \in \hat{\mathcal{E}}}~&~\trace(B^{\top}QB)\\
		\mathrm{s.to}~ &~|{\mcE}| \leq K\label{eq:P1:cardinality}\\
 		&~Q \text{~satisfies}~\eqref{observability},~{\mcE} \text{~is connected.}
		\end{align}
	\end{subequations}
	
	Under Assumption~\ref{as:A1}, the objective of \eqref{eq:P1} can be shown to be proportional to $\trace(\tW\tL^{-1})$, where $\tW$ and $\tL$ are the $N\times N$ matrices obtained after removing the first row and column from $W$ and $L$, respectively; see \cite[Lemma 1]{BDHK18}. 
	 Problem~\eqref{eq:P1} can be then rewritten as~\cite{BDHK18}
	\begin{align}\label{eq:P2}
	\arg\min_{{\mcE} \in \hmcE} ~&~\trace(\tW\tL^{-1})\\
	\mathrm{s.to}~ &~ |{\mcE}| \leq K, \quad \rank(L) = N \nonumber
	\end{align}
	where the rank constraint ensures that $\mcE$ is connected. 

	
	To express the optimization in \eqref{eq:P2} over $\hmcE$ in a more convenient form, let us associate each line $\ell\in\hmcE$ with a binary variable $z_{\ell}$, which is $z_{\ell}=1$ if line $\ell$ is selected (that is $\ell \in\mcE$); and $z_{\ell}=0$, otherwise. If we collect variables $\{z_{\ell}\}_{\ell \in\hmcE}$ in vector $z$, then $z$ must lie in the set $\mcZ:=\left\{z:z^{\top}{1}_{|\hat{\mathcal{E}}|} \leq K, \ z \in \{0,1\}^{|\hmcE|}\right\}.$ Based on the line selection vector $z$, the reduced susceptance Laplacian of $\hmcG$ can be expressed as
	\begin{equation}\label{eq:L2}
	\tL(z)=\sum_{(i,j)\in\hmcE} z_{ij}b_{ij} a_{ij} a_{ij}^\top.
	\end{equation}
	Here, each $a_{ij} \in \{0,\pm1\}^{1 \times N}$ is the row vector of the reduced branch-bus incidence matrix  corresponding to line $(i,j)\in\hmcE$ \cite{BDHK18}. 
	Given the explicit form of $\tL(z)$, it is not hard to see that the objective in \eqref{eq:P2} is a monotone function that is minimized when all lines in $\hmcE$ are selected. Utilizing \eqref{eq:L2}, problem \eqref{eq:P2} can be equivalently written as an MINLP~\cite{BDHK18}
	\begin{subequations} \label{eq:P4}
		\begin{align}
		(X^*,z^*)\in\arg\min_{X,z\in \mcZ} ~&~\trace(\tW X)\label{eq:P4:cost}\\
		\mathrm{s.to}~ &~\tL(z)X = I_N. \label{eq:P4:con}
		\end{align}
	\end{subequations}
	Constraint \eqref{eq:P4:con} enforces $X=\tL^{-1}(z)$, and thus, matrix $\tL(z)$ to be full-rank at optimality. Problem \eqref{eq:P4} is non-convex due to the binary nature of $z$ and the bilinear constraints in \eqref{eq:P4:con}. To handle the latter, we adopt McCormick linearization~\cite{mccormick1976computability}, which is briefly reviewed next; see also \cite{BDHK18} for details.
	
	Constraint~\eqref{eq:P4:con} involves bilinear terms $z_{\ell}X_{ij}$ for all $\ell \in\hmcE$ and $i,j\in\mcV_r$. For every term, introduce an auxiliary variable 
	\begin{equation} \label{bilinear}
	y_{\ell ij}=z_{\ell}X_{ij}
	\end{equation}
	and let the entries $X_{ij}$ lie within bounds $[\uX_{ij},\oX_{ij}]$. Since $z_{\ell}$ is binary, the following inequalities hold true
	\begin{subequations}\label{mccormick}
		\begin{align}
		& y_{\ell ij} \geq z_{\ell}\uX_{ij}, \quad y_{\ell ij} \geq {X}_{ij} + z_{\ell}\oX_{ij} -\oX_{ij}\label{mccormick:b}\\ 
		& y_{\ell ij} \leq z_{\ell}\oX_{ij},  \quad y_{\ell ij} \leq {X}_{ij} + z_{\ell}\uX_{ij} -\uX_{ij}.\label{mccormick:d}
		\end{align}
	\end{subequations}
	One can replace the bilinear terms in \eqref{eq:P4:con} by $y_{\ell ij}$'s, drop equation  \eqref{bilinear}, and enforce \eqref{mccormick} as additional constraints for all $\ell\in\hmcE$ and $i,j\in\mcV_r$ to get an MILP reformulation of \eqref{eq:P4}. It is not hard to show that this reformulation is \emph{exact}, i.e.,  $y_{\ell ij}=z_{\ell}X_{ij}$ because $z$ is binary \cite{mccormick1976computability}. 
	
	Through the aforementioned process, problem \eqref{eq:P4} is reformulated to an MILP over variables $\{X_{ij}\}$, $\{z_{\ell}\}$, and $\{y_{\ell ij}\}$, and can thus be handled by modern MILP solvers such as Gurobi or CPLEX. Nonetheless, MILPs with McCormick linearization can be forbiddingly complex to solve if the bounds $(\uX_{ij},\oX_{ij})$ on each $X_{ij}$ are arbitrarily wide.
	
	\section{Valid Inequalities and Bound Tightening}\label{sec:bounds}
	This section develops graph theoretic arguments to provide easy-to-compute non-trivial bounds on the entries of $X$ and derive valid cuts for two topology design tasks: \emph{i)} adding lines to an existing connected network; and \emph{ii)} designing a new network afresh.
	\subsection{Augmenting Existing Power Networks} \label{subsec:augment}
	Consider an existing network described by $\mcG_e=(\mathcal{V},\mathcal{E}_e)$. The goal here is to select additional lines from $\hmcE\setminus \mcE_e$ to improve stability. This is an instance of problem \eqref{eq:P4} with the entries of $z$ related to the lines in $\mcE_e$ being fixed to one. Based on \eqref{eq:L2}, the reduced Laplacian matrices of the existing network $\mcG_e$ and network $\hmcG$ with all lines connected are $\tL_e:=\sum_{(i,j)\in\mcE_e} b_{ij} a_{ij} a_{ij}^\top$ and $\tL_f:=\sum_{(i,j)\in\hat{\mcE}} b_{ij} a_{ij} a_{ij}^\top$, respectively.
	Under this setup and assuming $\mcG_e$ is connected, the entries of $X$ minimizing \eqref{eq:P4} can be bounded as follows.
	\begin{lemma}\label{le:bounds1}
		Given that $\tL_f^{-1} \preceq X \preceq \tL_e^{-1}$, the diagonal entries of  $X$ are bounded by
		\begin{equation}\label{eq:bounds1d}
		[\tL_f^{-1}]_{ii} \leq X_{ii}\leq [\tL_e^{-1}]_{ii},\quad \forall i\in\mcV_r, 
		\end{equation}
		and its off-diagonal entries are bounded by
		\begin{subequations}\label{eq:bounds1od}
			\begin{align*}
			X_{ij}&\leq [\tL_f^{-1}]_{ij} +\sqrt{([\tL_e^{-1}]_{jj}-[\tL_f^{-1}]_{jj})([\tL_e^{-1}]_{ii}-[\tL_f^{-1}]_{ii})} \\
			X_{ij}&\geq [\tL_e^{-1}]_{ij} -\sqrt{([\tL_e^{-1}]_{jj}-[\tL_f^{-1}]_{jj})([\tL_e^{-1}]_{ii}-[\tL_f^{-1}]_{ii})}
			\end{align*}
		\end{subequations}
		for all $i, j \in\mcV_r$ with $j\neq i$.
	\end{lemma}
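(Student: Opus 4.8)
The plan is to derive everything from the Loewner-order sandwich $\tL_f^{-1}\preceq X\preceq\tL_e^{-1}$ that is taken as hypothesis. First I would record why this sandwich is the natural starting point: for any line-selection vector $z$ that is feasible for the augmentation instance of \eqref{eq:P4}, the expression \eqref{eq:L2} gives $\tL(z)=\tL_e+\sum_{(i,j)\in\hmcE\setminus\mcE_e}z_{ij}b_{ij}a_{ij}a_{ij}^\top$, i.e.\ $\tL_e$ plus a sum of positive-semidefinite rank-one terms whose total is at most $\tL_f-\tL_e$, so $\tL_e\preceq\tL(z)\preceq\tL_f$. Since $\mcG_e$ is connected, $\tL_e\succ0$, hence all three matrices are positive definite; feasibility forces $X=\tL^{-1}(z)$, and because matrix inversion reverses the Loewner order on the positive-definite cone, $\tL_f^{-1}\preceq X\preceq\tL_e^{-1}$. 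The diagonal bounds \eqref{eq:bounds1d} then follow immediately by evaluating these Loewner inequalities on the canonical vector $e_i$: $[\tL_f^{-1}]_{ii}=e_i^\top\tL_f^{-1}e_i\le e_i^\top X e_i=X_{ii}\le e_i^\top\tL_e^{-1}e_i=[\tL_e^{-1}]_{ii}$.

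For the off-diagonal bounds, the idea is to split the sandwich into two positive-semidefinite ``slack'' matrices, $D:=X-\tL_f^{-1}\succeq0$ and $E:=\tL_e^{-1}-X\succeq0$, and to invoke the elementary fact that every $2\times2$ principal submatrix of a positive-semidefinite matrix is positive semidefinite, hence has nonnegative determinant, so $|D_{ij}|\le\sqrt{D_{ii}D_{jj}}$ and $|E_{ij}|\le\sqrt{E_{ii}E_{jj}}$ for all $i\neq j$. Writing $X_{ij}=[\tL_f^{-1}]_{ij}+D_{ij}$ and using the already-proved diagonal bound to get $D_{kk}=X_{kk}-[\tL_f^{-1}]_{kk}\le[\tL_e^{-1}]_{kk}-[\tL_f^{-1}]_{kk}$ for $k\in\{i,j\}$ yields the claimed upper bound on $X_{ij}$. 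Symmetrically, writing $X_{ij}=[\tL_e^{-1}]_{ij}-E_{ij}$ and bounding $E_{kk}=[\tL_e^{-1}]_{kk}-X_{kk}\le[\tL_e^{-1}]_{kk}-[\tL_f^{-1}]_{kk}$ yields the lower bound.

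There is no deep obstacle here; the one thing to get right is the bookkeeping — using $D\succeq0$ (resp.\ $E\succeq0$) only for the off-diagonal estimate, but using the \emph{upper} Loewner bound to control $D_{ii},D_{jj}$ and the \emph{lower} Loewner bound to control $E_{ii},E_{jj}$ — so that both slack matrices share the same diagonal envelope $[\tL_e^{-1}]_{kk}-[\tL_f^{-1}]_{kk}$, which is precisely what makes the two square-root terms in \eqref{eq:bounds1od} identical. I would also remark that the argument is entirely local: only the diagonal gaps at the two endpoints $i$ and $j$ enter, so the bounds are cheap to evaluate once $\tL_e^{-1}$ and $\tL_f^{-1}$ have been computed, which is what makes them useful for the McCormick relaxation.
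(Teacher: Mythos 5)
Your proof is correct and is essentially the paper's argument in a cleaner package: the paper takes $v=e_i-\delta e_j$ in $v^\top(X-\tL_f^{-1})v\ge 0$, substitutes the diagonal bounds, and minimizes over $\delta>0$ to produce the square root, which is exactly the discriminant form of your $2\times 2$ principal-minor inequality $|D_{ij}|\le\sqrt{D_{ii}D_{jj}}$ applied to the same slack matrices $X-\tL_f^{-1}$ and $\tL_e^{-1}-X$. No gaps; the only difference is presentational.
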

	
	\begin{proof}
		Since $X\succeq\tL_f^{-1}$, it follows that $v^\top(X-\tL_f^{-1})v\geq 0$ for all $v$.
		Selecting $v=e_i-\delta e_j$ for some $\delta\geq 0$ yields
		\begin{equation}\label{eq:b1}
		X_{ii} +\delta^2 X_{jj}-2\delta X_{ij}\geq [\tL_f^{-1}]_{ii}+\delta^2[\tL_f^{-1}]_{jj}-2\delta [\tL_f^{-1}]_{ij}.
		\end{equation}
		Setting $\delta=0$ provides the LHS of \eqref{eq:bounds1d}. The RHS of \eqref{eq:bounds1d} can be obtained by exploiting $\tL_e^{-1}\succeq X$ likewise.
		
		For the off-diagonal entries of $X$, rearrange \eqref{eq:b1} if $\delta>0$ and substitute $X_{ii}$ and $X_{jj}$ with the respective upper bounds from \eqref{eq:bounds1d} to obtain
        \begin{align*}
		    X_{ij} &\leq [\tL_f^{-1}]_{ij} +\frac{[\tL_e^{-1}]_{ii}-[\tL_f^{-1}]_{ii}}{2\delta}+ 
		{\delta}\frac{[\tL_e^{-1}]_{jj}-[\tL_f^{-1}]_{jj}}{2}.
        \end{align*}
		The RHS of the upper bound on $X_{ij}$ can be minimized over $\delta>0$ to obtain $\delta^\star:=\sqrt{\frac{[\tL_e^{-1}]_{ii}-[\tL_f^{-1}]_{ii}}{[\tL_e^{-1}]_{jj}-[\tL_f^{-1}]_{jj}}}.$
		Plugging $\delta^\star$ back into the bounds completes the proof. The lower bounds on $X_{ij}$'s can be obtained similarly starting from $\tL_e^{-1}\succeq X$.
	\end{proof}
	
%
	To provide some alternate bounds on the entries of $X$ that may be tighter, let us consider the following inequality \cite{ellen2011}
	\begin{equation}\label{eq:cut1}
		X_{ii} + X_{jj} - 2X_{ij} \leq d_{ij} + \epsilon \quad \forall~i,j \in \mcV_r
	\end{equation}
	where the LHS of \eqref{eq:cut1} is defined as the effective resistance of the graph $R_{ij}$ ; scalar $d_{ij}$ is equal to the sum of reactance values along the shortest path between nodes $i$ and $j$ on $\mcG_e$; and $\epsilon$ is an arbitrarily small positive number. The length $d_{ij}$ of the shortest path between all pairs $(i,j) \in \mcV_r$ can be obtained by using the Floyd-Warshall algorithm \cite{floyd62}. Note that for the special case where there is a unique path between nodes $i$ and $j$, the bound in \eqref{eq:cut1} is tight, that is $R_{ij}=d_{ij}$. Moreover, the effective resistance $R_{ij}$ does not increase when edges are added \cite{ellen2011}. This simple fact can be exploited to obtain the following bounds.
	 \begin{lemma}\label{UBcor3}
	 	The off-diagonal entries of $X$ are lower bounded by $X_{ij}  \geq \frac{[\tL_f^{-1}]_{ii} + [\tL_f^{-1}]_{jj} - d_{ij} - \epsilon}{2}$ for all $i,j \in \mcV_r$ with $j\neq i$.
	 \end{lemma}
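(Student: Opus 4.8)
The plan is to combine the inequality \eqref{eq:cut1} with the lower bound already established in Lemma~\ref{le:bounds1}, or more directly with the positive-semidefinite ordering $X \succeq \tL_f^{-1}$. First I would rewrite \eqref{eq:cut1} evaluated on the effective resistance of $X$ itself (i.e., $R_{ij} = X_{ii}+X_{jj}-2X_{ij}$) to obtain
\begin{equation*}
X_{ij} \geq \frac{X_{ii}+X_{jj} - d_{ij} - \epsilon}{2}.
\end{equation*}
This is the key structural identity: solving the effective-resistance bound for the off-diagonal entry.

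Next I would invoke the diagonal lower bounds from Lemma~\ref{le:bounds1}, namely $X_{ii} \geq [\tL_f^{-1}]_{ii}$ and $X_{jj} \geq [\tL_f^{-1}]_{jj}$, which follow from $X \succeq \tL_f^{-1}$ by testing against the canonical vectors $e_i$ and $e_j$. Substituting these into the numerator of the displayed inequality — noting that the right-hand side is increasing in both $X_{ii}$ and $X_{jj}$ — yields
\begin{equation*}
X_{ij} \geq \frac{[\tL_f^{-1}]_{ii} + [\tL_f^{-1}]_{jj} - d_{ij} - \epsilon}{2},
\end{equation*}
which is exactly the claimed bound.

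The one point requiring care is the direction of the monotonicity argument: replacing $X_{ii}$ and $X_{jj}$ by their \emph{lower} bounds only preserves the inequality because $X_{ij}$ is bounded below by a quantity that is \emph{increasing} in $X_{ii}+X_{jj}$, so a smaller numerator gives a weaker (still valid) lower bound on $X_{ij}$. I should also justify that \eqref{eq:cut1} applies to the optimal $X = \tL^{-1}(z^*)$: since $\mcE_e \subseteq \mcE \subseteq \hmcE$, the graph realizing $X$ contains $\mcG_e$, and because effective resistance does not increase under edge addition \cite{ellen2011}, $R_{ij}^{X} \leq R_{ij}^{\mcG_e} \leq d_{ij} + \epsilon$, so the bound $X_{ii}+X_{jj}-2X_{ij} \leq d_{ij}+\epsilon$ holds for every feasible $X$ in \eqref{eq:P4}. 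No genuine obstacle arises here — the result is essentially a one-line algebraic rearrangement of \eqref{eq:cut1} combined with the already-proved diagonal bound — so the main task is simply to state the two ingredients cleanly and chain them.
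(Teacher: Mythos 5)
Your proof is correct and follows exactly the paper's argument: rearrange the effective-resistance inequality \eqref{eq:cut1} to isolate $X_{ij}$, then substitute the diagonal lower bounds $X_{ii}\geq[\tL_f^{-1}]_{ii}$ and $X_{jj}\geq[\tL_f^{-1}]_{jj}$ from \eqref{eq:bounds1d}. Your added remarks on the monotonicity direction and on why \eqref{eq:cut1} remains valid for any feasible $X$ (effective resistance is non-increasing under edge addition) are sound and merely make explicit what the paper leaves implicit.
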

	 \begin{proof}
	 	The bound can be simply obtained by rearranging the terms in \eqref{eq:cut1} and substituting the lower bounds for the diagonal entries $(X_{ii}, X_{jj})$ from \eqref{eq:bounds1d}.
	 \end{proof}
 
	Between the two lower bounds on the off-diagonal entries of $X$, we only keep the tighter one. Note that the reduced Laplacian matrix $\tL_e$ of the existing network $\mcG_e$ is invertible only if $\mcG_e$ is connected. If $\mcG_e$ is not connected, one could obtain bounds on the entries of $X$ by imposing a meshed or radial structure on the sought topology as discussed next.
	
	\subsection{Designing New Power Networks} \label{subsec:newdesign}
	This section considers designing a network afresh. In \cite{BDHK18}, we derived bounds useful for the design of radial topologies. Here we develop a new approach to derive bounds on $X_{ij}$'s for meshed networks. Heed the lower bound in \eqref{eq:bounds1d} is also valid for the design of new networks. Since $X$ is an inverse M-matrix~\cite{johnson82}, its off-diagonal entries also satisfy $X_{ij} \geq 0$.
	
	Exploiting the structure of $\hmcG=(\mcV,\hmcE)$, we can provide additional information on the entries of $X$ to accelerate \eqref{eq:P4} for designing radial and meshed grids alike. For example, if $\hmcG$ is disconnected upon removing edge $\ell\in\hmcE$, then $\ell$ belongs to the sought network topology and $z_\ell^*=1$ before solving \eqref{eq:P4}. Such critical edges $\mcE_c \subset \hmcE$ can be identified using the algorithm presented in our previous work \cite{BDHK18} and the entries of $z$ corresponding to these edges can be safely set to $1$. This process not only reduces the binary search for $z^*$ in \eqref{eq:P4}, but also tightens the lower bounds on certain $X_{ij}$'s. 
	
			\begin{cor}\label{UBcor1}
			Suppose a critical edge $\ell=(i,j) \in \hmcE$ partitions the nodes of $\hmcG$ into two disjoint connected components: set $\mcV_\ell$ and its complement $\bar{\mcV}_\ell$. If $\mcV_\ell$ contains nodes $(i,1)$ and $\bar{\mcV}_\ell$ contains node $j$, then
				\begin{equation}\label{eq:cor1}
			\frac{[\tL_f^{-1}]_{ii} + [\tL_f^{-1}]_{jj} - \hat{d}_{ij} - \epsilon}{2} \leq X_{ij}
			\end{equation}
		where $\hat{d}_{ij}$ is the length of the shortest path (reactance of edge $\ell$) between nodes $i$ and $j$ on graph $\hmcG$. 
		Similarly, for the design of radial networks one can tighten the lower bounds as
		\begin{equation}\label{eq:cor2}
		\hat{d}_{i1} - \epsilon \leq X_{ij}
		\end{equation}
		\end{cor}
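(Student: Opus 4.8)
The plan is to derive both inequalities by specializing the effective-resistance argument already used for Lemma~\ref{UBcor3} to an edge that is \emph{forced} into the topology, and then, for \eqref{eq:cor2}, additionally using the fact that a tree has a unique path between any pair of nodes. Throughout I use $X=\tL^{-1}$ for the realized (feasible or optimal) topology $\mcE$, the identity $R_{ab}=X_{aa}+X_{bb}-2X_{ab}$ for the effective resistance between $a$ and $b$ as in \eqref{eq:cut1} (with $X_{11}=X_{1a}=0$ since row/column~$1$ is removed, so $R_{a1}=X_{aa}$), and the elementary fact that the effective resistance between two nodes is at most the reactance of any path joining them.

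First, for \eqref{eq:cor1}: since $\ell=(i,j)$ is critical, deleting it disconnects $\hmcG$, hence $\ell$ lies in \emph{every} connected spanning subgraph of $\hmcG$; in particular $z_\ell=1$ for every feasible $z$ in \eqref{eq:P4}, so $\ell\in\mcE$. Then the single edge $\ell$ is an $i$--$j$ path of reactance $x_{ij}=\hat d_{ij}$ in $\mcE$, so applying the reasoning behind \eqref{eq:cut1} to $\mcE$ gives $X_{ii}+X_{jj}-2X_{ij}=R_{ij}\le \hat d_{ij}+\epsilon$. Rearranging and substituting the lower bounds $X_{ii}\ge[\tL_f^{-1}]_{ii}$ and $X_{jj}\ge[\tL_f^{-1}]_{jj}$, which the text notes remain valid for new designs (the lower part of \eqref{eq:bounds1d}), yields $2X_{ij}\ge [\tL_f^{-1}]_{ii}+[\tL_f^{-1}]_{jj}-\hat d_{ij}-\epsilon$, which is \eqref{eq:cor1}. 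This is verbatim the proof of Lemma~\ref{UBcor3} with the generic shortest-path reactance replaced by the reactance $\hat d_{ij}$ of the forced edge $\ell$.

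For \eqref{eq:cor2}, assume in addition that $\mcE$ is radial, i.e., a spanning tree of $\hmcG$ that (by criticality) contains $\ell$. Removing $\ell$ from $\mcE$ splits it into two subtrees, one spanning $\mcV_\ell$ (hence containing $1$ and $i$) and one spanning $\bar\mcV_\ell$ (containing $j$). Consequently the unique $\mcE$-path from $j$ to the reference node~$1$ is $\ell$ followed by the unique $\mcE$-path from $i$ to $1$, and the unique $\mcE$-path from $i$ to $j$ is $\ell$ itself; since in a tree effective resistance equals path reactance, this gives $R_{j1}=x_{ij}+R_{i1}$ and $R_{ij}=x_{ij}$. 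Plugging $R_{i1}=X_{ii}$, $R_{j1}=X_{jj}$ and $R_{ij}=X_{ii}+X_{jj}-2X_{ij}$ into these two relations and eliminating $x_{ij}$ gives $X_{ij}=X_{ii}=R_{i1}$. Finally $R_{i1}$ is the reactance of an $i$--$1$ path lying inside $\hmcG$ (namely the $\mcE$-path), so $R_{i1}\ge\hat d_{i1}$, and therefore $X_{ij}=R_{i1}\ge\hat d_{i1}>\hat d_{i1}-\epsilon$, which is \eqref{eq:cor2}.

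I expect the only delicate point to be the graph-theoretic bookkeeping rather than any inequality: one must argue carefully that criticality of $\ell$ forces $\ell\in\mcE$ for \emph{every} feasible topology (so the resulting bound is a valid cut, not merely a property of the optimum), and that on the $\{1,i\}$-side of the cut the relevant paths to the reference node are unaffected by the remainder of $\bar\mcV_\ell$, so that ``shortest $i$--$j$ path $=\ell$'' and, in the radial case, ``unique $j$--$1$ path $=\ell$ followed by the $i$--$1$ path'' are legitimate. Once these structural facts are established, both displays reduce to the rearrangements already carried out in Lemmas~\ref{le:bounds1} and~\ref{UBcor3}.
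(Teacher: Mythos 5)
Your argument for \eqref{eq:cor1} is exactly the paper's: criticality of $\ell$ forces $z_\ell=1$ in every connected (hence every feasible) topology, the single edge is then the shortest $i$--$j$ path on $\hmcG$ so $\hat d_{ij}=x_{ij}$, and the effective-resistance inequality $X_{ii}+X_{jj}-2X_{ij}=R_{ij}\le \hat d_{ij}+\epsilon$ combined with the lower half of \eqref{eq:bounds1d} (still valid for new designs since $\tL(z)\preceq \tL_f$) gives the bound, i.e.\ the proof of Lemma~\ref{UBcor3} with the forced edge in place of a generic shortest path. Where you genuinely diverge is \eqref{eq:cor2}: the paper simply defers to \cite[Lemma 4]{BDHK18} (which there relied on a degree-one reference-node assumption), whereas you give a self-contained structural argument — the spanning tree minus $\ell$ must split exactly along $(\mcV_\ell,\bar\mcV_\ell)$, so the unique $j$--$1$ path is $\ell$ concatenated with the $i$--$1$ path, and the identities $R_{i1}=X_{ii}$, $R_{j1}=X_{jj}$, $R_{ij}=X_{ii}+X_{jj}-2X_{ij}$ then force $X_{ij}=X_{ii}=R_{i1}\ge\hat d_{i1}$, which is even slightly stronger than \eqref{eq:cor2} (no $\epsilon$ needed). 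Your component bookkeeping is sound: since $\ell$ is the only $\hmcG$-edge across the cut, the two components of the tree minus $\ell$ coincide with $\mcV_\ell$ and $\bar\mcV_\ell$, and the simple $j$--$1$ path must use $\ell$ as its first edge because $\ell$ is incident to $j$ and a simple path cannot revisit $j$. The trade-off is that your route makes the radial bound verifiable without consulting the earlier reference and without its restrictive reference-node assumption, at the cost of a longer proof than the paper's one-line citation; both establish the bound as a cut valid for all feasible topologies, which is what the McCormick bound-tightening requires.
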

	\begin{proof}
		Because edge $\ell$ is the only connection between nodes $i$ and $j$ on graph $\hmcG$, this edge is also the shortest path between the two nodes and must belong to the sought network topology. The bound in \eqref{eq:cor1} can then be shown to be valid using the arguments in the proof of Lemma~\ref{UBcor3}. To obtain the bound in \eqref{eq:cor2}, we refer to arguments presented in~\cite[Lemma 4]{BDHK18}.
		\end{proof}

So far we have obtained lower bounds on the entries of $X$, that is $\underline{X}\leq X$ for a known matrix $\underline{X}$. Using these entry-wise lower bounds, valid upper bounds on the entries of the symmetric matrix $X$ can be found by solving $\frac{N(N+1)}{2}$ linear programs of the form
\begin{subequations}\label{eq:LPs}
	\begin{align}
	\max_{X}~&~\alpha^\top X\beta\\
	\subjectto~&~\trace(\tW X) \geq \trace(\tW X^r), \\
	~&~\trace(\tW X) \leq \trace(\tW X^f),  \label{eq:LPs:c1} \\
	~&~X_{ii} +  X_{jj} - 2 X_{ij}\geq [\tL_f^{-1}]_{ii}+ [\tL_f^{-1}]_{jj} - 2[\tL_f^{-1}]_{ij} \label{eq:LPs:c2} \\
		~&~X_{ii} + X_{jj} - 2X_{ij} \leq \hat{d}_{ij} + \epsilon \quad \forall (i,j) \in \mcE_c \label{eq:LPs:c3} \\
		~&~X_{ii}\geq X_{ij} \quad \forall~i,j \in\mcV_r, i \neq j \label{eq:LPs:c4} \\
		~&~\underline{X}\leq X \label{eq:LPs:c5}
	\end{align}
\end{subequations}
where the vectors $(\alpha,\beta)$ in the cost can be set as: \emph{1)} $\alpha=\beta=e_i$ to upper bound $X_{ii}$; and \emph{2)} $\alpha=e_i$ and $\beta=e_j$ to upper bound $X_{ij}$. Matrix $X^f$ is any feasible solution to \eqref{eq:P4} and $X^r$ is the solution to the \emph{relaxed} MILP formulation of \eqref{eq:P4}, constraint \eqref{eq:LPs:c2} can be shown to be valid by simply substituting $\delta=1$ in \eqref{eq:b1}, the cut in \eqref{eq:LPs:c3} is derived using \eqref{eq:cut1} and Corollary~\ref{UBcor1}, and the inequality in \eqref{eq:LPs:c4} is a property of inverse M-matrices \cite{johnson82}. The MILP formulation of \eqref{eq:P4} with newly derived bounds in Sections~\ref{subsec:augment} and \ref{subsec:newdesign} together with \eqref{eq:LPs:c4} is henceforth referred to as the \emph{Tightened} MILP. Note that \eqref{eq:LPs:c4} is an important constraint that was also included in our previous MILP formulations \cite{BDHK18}.

	Our previous work in \cite{BDHK18} relied on the assumption that there exists a node in $\mcV$ that is incident to exactly one edge in $\hat{\mathcal{E}}$. By fixing this node to be the reference node, we were able to obtain several graph-theoretic bounds that are valid for the design of radial networks. This limiting assumption can be waived for the design of meshed networks, i.e., valid bounds can be found on $X_{ij}$'s, even if the chosen reference node has more than one edge incident on it.
	 For the special case where removing all connections to the reference node results in more than two connected components, matrix $X$ becomes block diagonal in structure and each sub-network (meshed or radial) can be designed independently by solving a separate MILP. In that sense, the problem in \eqref{eq:P4} is parallelizable. 

\begin{rmk} \label{rmk:ElemN}
	When considering the problem in Section~\ref{subsec:augment}, the assumption was that one is only augmenting the edges of a pre-existing network $\mathcal{G}_e$. However, elimination or addition of nodes (and associated edges) may also occur when conventional generation plants are phased out and new ones are built. We can account for these changes in our proposed framework. For instance, elimination of a node $n \in {\mathcal{V}}$ and its incident edges would result in a modified network $\mathcal{G}_e'=(\mathcal{V}',\mathcal{E}_e')$, where $\mathcal{V}'=\mathcal{V}\backslash \{n\}$ and $\mathcal{E}_e' \subset \mathcal{E}_e$. If $\mathcal{G}_e'$ is connected, then valid bounds on $X_{ij}$'s can be derived from Lemma~\ref{le:bounds1} by removing the $n$-th row and column of the Laplacian matrices $\tilde{L}_e$ and $\tilde{L}_f$. If $\mathcal{G}_e'$ is not connected, then one can obtain bounds on the entries of $X$ by enforcing a radial or meshed network topology using the process outlined in this subsection. Addition of nodes (and associated candidate edges) can be handled similarly.
\end{rmk}

So far we have presented techniques for finding bounds on the entries of $X$ \emph{a-priori}, that is before solving the MILP. For the design of new network topologies the derived bounds on $X_{ij}$'s may still be relatively loose. To accelerate the convergence of the solver, we next explore how valid global cuts can be added to the problem on-the-fly.

\subsection{Eigenvector-based Cuts for New Network Design}\label{subsec:eig}
	To see how valid cuts can be generated dynamically during the solution process, consider the constraint
	\begin{equation} \label{eq:Y}
	Y:=
	\begin{bmatrix}
		X & I_N \\
		I_N & \tL(z)
	\end{bmatrix}\succeq 0
	\end{equation}
	which is valid for \eqref{eq:P4}. One could replace \eqref{eq:P4:con} with \eqref{eq:Y}, but that would require solving a much harder mixed-integer semi-definite program (MISDP). Instead, observe that the SDP constraint in \eqref{eq:Y} could strengthen the bounds in MILP solvers and reduce the size of the branch-and-bound tree. Since enforcing such constraints is non-trivial, one can exploit the alternative characterization of an SDP matrix by selecting $S$ vectors $v_s\in\mathbb{R}^{2N}$ and augment the MILP in \eqref{eq:P4} with the linear cuts
	\begin{equation} \label{eq:SDPcut}
	v_s^{\top}Yv_s \geq 0, \quad s=1,\ldots,S.
	\end{equation}
	Vectors $\{v_s\}_{s=1}^S$ could be random, e.g., independently drawn as $v_s \sim \mcN(0, I_N)$. However, adding such constraints may not necessarily tighten the MILP formulation. We explore how $v_s$'s can be chosen judiciously to yield more meaningful cuts.
	 
	 \emph{$2k$-sparse eigenvector cuts:}
	 Let $(z^r, X^r)$ be the solution to the MILP formulation of \eqref{eq:P4} obtained by relaxing $z \in \{0,1\}^{|\hmcE|}$ to the box constraints $z\in [0,1]^{|\hmcE|}$. Moreover,  let $Y^r$ be the matrix obtained by substituting  $(z^r, X^r)$ in $Y$. Heed that the \emph{relaxed} MILP formulation is not exact, i.e., $(z^r, X^r)$ may not satisfy \eqref{eq:P4:con}. In fact, the pair $(z^r, X^r)$ may not satisfy \eqref{eq:Y} either. A valid cut can be derived from the latter observation, as explained below.  
	 
	 Another way of enforcing \eqref{eq:Y} is to ensure all the eigenvalues of $Y$ are non-negative. This can be accomplished by assigning $v_s$ in \eqref{eq:SDPcut} to be the eigenvectors corresponding to the negative eigenvalues of $Y^r$.
	Notice that the cuts in \eqref{eq:SDPcut} do not have to be added only once at the beginning of the solution process (root node), but can also be added dynamically by interjecting the MILP solver at every branch-and-bound node when a new relaxed solution is found for \eqref{eq:P4}. However, since the eigenvectors $\{v_s\}$ are generally non-sparse, each one of the linear inequality constraints $v_s^{\top}Yv_s$ will couple almost all entries in $Y$, that is roughly $N^2$ variables. Such dense constraints can be detrimental to the solver's overall solution time. To alleviate this, we include cuts stemming from a sparse $v_s$. To this end, let us partition $v_s^{\top}:= [v_{s_1}^{\top}~v_{s_2}^{\top}]$ such that
	\begin{equation}\label{eq:split}
	v_s^{\top}Yv_s = v_{s_1}^{\top}X v_{s_1}+v_{s_2}^{\top}\tL v_{s_2}+ 2v_{s_1}^{\top}v_{s_2}.
	\end{equation}
	Since constraint \eqref{eq:Y} is equivalent to $X\succeq 0$ and $X\succeq \tL^{-1}(z)$; see~\cite[Sec.~A.5.5]{BoVa04}, the first two entries in the summand of \eqref{eq:split} are non-negative and only the last term 
 	\begin{equation}\label{eq:sum}
 	2v_{s_1}^{\top}v_{s_2}=2\sum_{n=1}^N v_{{s_1},n}v_{{s_2},n}
 	\end{equation}
	contributes to $v_s^{\top}Yv_s$ being negative. Keeping this in mind, the idea here is to identify the $k$ most negative entries out of the $N$ summands in \eqref{eq:sum}. For the related indices $n$, we maintain the entries of $(v_{s_1},v_{s_2})$, whereas for the remaining indices we set the corresponding entries to zero. Thus, the modified vectors $(v_{s_1},v_{s_2})$ are $k$-sparse and bear the same sparsity pattern. Vector $v_s$ is then $2k$-sparse.
 	
	While the $2k$-sparse cuts can effectively enforce constraint \eqref{eq:Y} in the MILP formulation, there is a trade-off between the number of such cuts and improvement in solution time. Recall that because the cuts in \eqref{eq:SDPcut} are added to the solution process on-the-fly, adding too many cuts can significantly slow down the solver. 
	To circumvent this, we suggest adding only those vectors $\{v_s\}$ that correspond to the negative eigenvalues of $Y^r$ falling below a pre-defined threshold $\gamma$ and/or limit the total number of constraints added.
	
Beyond tightening the bounds on the entries of $X$ and adding constraints of the form in \eqref{eq:SDPcut}, constraints \eqref{eq:LPs:c2}-\eqref{eq:LPs:c3} can also be appended to the MILP formulation to improve computational efficiency.

     So far our framework has captured the dynamics of a power system where all nodes host synchronous machines. 
     Practical power networks however have zero-injection nodes with no dynamic behavior that can be eliminated via Kron-reduction \cite{Kron2013}. To see how the Kron-reduced Laplacian $L_{red}(z)$ relates to the full Laplacian matrix $\tL(z)$ and its inverse $X$, consider again the power system graph $\hat{\mathcal{G}}$ and partition its nodes $\mathcal{V}$ into synchronous $\mathcal{S}$ and zero-injection nodes $\bar{\mathcal{S}}$. Then it follows from \eqref{eq:P4:con} that \begin{equation}\label{eq:Kron}
	X=	\begin{bmatrix}
		X_{\mcS \mcS} & X_{\mcS \bar{\mcS}} \\
		X_{\bar{\mcS} \mcS} & X_{\bar{\mcS} \bar{\mcS}}
	\end{bmatrix} = 
	\begin{bmatrix}
		\tL_{\mcS \mcS}(z) & \tL_{\mcS \bar{\mcS}}(z) \\
		\tL_{\bar{\mcS} \mcS}(z) & \tL_{\bar{\mcS} \bar{\mcS}}(z)
	\end{bmatrix}^{-1}
	\end{equation}
	Applying the matrix inversion lemma for block matrices~\cite[Sec.~A.5.5]{BoVa04}, the inverse of the Kron-reduced Laplacian matrix $X_{\mcS \mcS}=L^{-1}_{red}(z)=[\tL_{\mcS \mcS}(z)-\tL_{\mcS \bar{\mcS}}(z)\tL_{\bar{\mcS} \bar{\mcS}}(z)^{-1}\tL_{\bar{\mcS} {\mcS}}(z)]^{-1}$.
 Hence, power systems with zero-injection nodes can be simply handled by replacing the cost in \eqref{eq:P4:cost} with $\trace(\tW_{\mcS \mcS} X_{\mcS \mcS})$. The remaining formulation in \eqref{eq:P4} remains unaltered and optimization occurs on the complete $X$. Nodes with passive loads can also be eliminated via Kron-reduction to yield an identical form of $X_{\mcS \mcS}$ \cite{Kron2013}.

While the focus has been on techniques to solve the topology design problem exactly, we next explore conditions under which a greedy scheme would perform well.
\section{Supermodularity of Edge Augmentation}\label{sec:subm}
	For a given finite set $\mcS$, a function $f: 2^{\mcS} \rightarrow \mathbb{R}$ is said to be supermodular if for all subsets $\mcA \subseteq \mcB \subseteq \mcS$ and all $\mcC \in \mcS \backslash \mcB$, it holds that $f(\mcA \cup \mcC)-f(\mcA) \leq f(\mcB \cup \mcC)-f(\mcB)$ \cite{submodulargreedy}. In other words, the returns due to selection of $\mcC$ are non-diminishing where adding elements to the larger set $\mcB$ gives larger gains. Minimizing a supermodular decreasing function is NP-hard. However, it is known that a greedy scheme that iteratively minimizes the objective $f(A \cup \{s\})$ for $s \in \mcS \backslash \mcA$ is at least $1-1/e\simeq 63\%$ close to the optimal cost~\cite{submodulargreedy}.
	\begin{lemma}[\cite{submodulargreedy}]\label{submodularlem}
		Let $A^*$ and $A^{g}$  be the global and best greedy minimizer to the supermodular decreasing function $f$, respectively. Then $\frac{f(A^*)-f(A^{g})}{f(\emptyset)-f(A^*)} \leq 1/e$.
	\end{lemma}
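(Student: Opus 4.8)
The plan is to reduce this to the classical Nemhauser--Wolsey--Fisher guarantee for greedy \emph{maximization} of a monotone submodular function, which is the standard route for statements of this type; indeed the result is quoted from \cite{submodulargreedy}, so one may simply cite it, but here is how I would reconstruct the argument. Throughout I treat the optimization as cardinality‑constrained over subsets of the finite ground set $\mcS$ with a budget $k$ (here $k$ plays the role of $K$ in \eqref{eq:P4}), so that $A^*$ denotes a best subset of size at most $k$ and the greedy procedure runs for $k$ steps, producing a chain $\emptyset = A_0 \subseteq A_1 \subseteq \cdots \subseteq A_k = A^{g}$ with $|A_i|=i$. Since $A^{g}$ is taken to be the best over all valid greedy runs, it suffices to prove the bound for an arbitrary such run.

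First I would pass to the complementary gain function $g(\mcA):=f(\emptyset)-f(\mcA)$. Because $f$ is supermodular, $g$ inherits the diminishing‑returns property and is therefore submodular; because $f$ is decreasing, $g$ is monotone nondecreasing; and $g(\emptyset)=0$ with $g(\mcA)\ge 0$. A greedy step that minimizes $f(A_i\cup\{s\})$ over $s\in\mcS\setminus A_i$ is exactly a greedy step that maximizes $g(A_i\cup\{s\})$, and $A^*$ maximizes $g$ under the same budget, so the claim is equivalent to $g(A^{g})\ge(1-1/e)\,g(A^*)$.

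Next I would run the standard telescoping estimate. For each $i$, monotonicity gives $g(A^*)\le g(A^*\cup A_i)$, and submodularity gives $g(A^*\cup A_i)-g(A_i)\le\sum_{s\in A^*\setminus A_i}\bigl(g(A_i\cup\{s\})-g(A_i)\bigr)$; each summand is at most the marginal gain $g(A_{i+1})-g(A_i)$ realized by the greedy choice, and there are at most $k$ of them. Writing $\delta_i:=g(A^*)-g(A_i)\ge 0$, this yields $\delta_i\le k(\delta_i-\delta_{i+1})$, hence $\delta_{i+1}\le(1-1/k)\delta_i$, and therefore $\delta_k\le(1-1/k)^k\delta_0\le e^{-1}\delta_0=e^{-1}g(A^*)$ since $\delta_0=g(A^*)-g(\emptyset)=g(A^*)$. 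Rearranging, $g(A^{g})=g(A_k)\ge(1-1/e)\,g(A^*)$.

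Finally I would translate back: $g(A^*)-g(A^{g})=f(A^{g})-f(A^*)$ and $g(A^*)=f(\emptyset)-f(A^*)$, so $\frac{f(A^{g})-f(A^*)}{f(\emptyset)-f(A^*)}\le\frac1e$, with the denominator nonnegative ($f$ decreasing) and the numerator nonnegative ($A^*$ optimal), so the ratio is well defined; this is exactly the asserted estimate (and, since its numerator has the opposite sign of the quantity $f(A^*)-f(A^{g})\le 0$, it is in fact the meaningful form of the bound). There is no deep obstacle here, as the statement is classical; the only care needed is in (i) making the implicit cardinality budget and the hypotheses (finite ground set, $f$ supermodular \emph{and} decreasing) explicit, (ii) justifying the one-line inequality $\delta_{i+1}\le(1-1/k)\delta_i$ from submodularity, monotonicity, and the greedy rule, and (iii) handling ties in the greedy selection and the case $|A^*|<k$ without loss of generality.
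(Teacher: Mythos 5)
The paper does not prove this lemma at all --- it is imported verbatim from \cite{submodulargreedy} --- so there is no in-paper argument to compare against. Your reconstruction is the classical Nemhauser--Wolsey--Fisher guarantee and it is correct: passing to $g(\mcA):=f(\emptyset)-f(\mcA)$ turns the supermodular decreasing $f$ into a monotone nondecreasing submodular $g$ with $g(\emptyset)=0$, the greedy minimizer of $f$ becomes the greedy maximizer of $g$, and the telescoping estimate $\delta_{i+1}\le(1-1/k)\delta_i$ followed by $(1-1/k)^k\le e^{-1}$ gives exactly the claimed ratio. Two remarks worth keeping. First, you are right that the inequality as literally printed is vacuous, since $f$ decreasing and $A^*$ optimal force $f(A^*)-f(A^{g})\le 0$ while the denominator is nonnegative; the substantive content is the bound on $\bigl(f(A^{g})-f(A^*)\bigr)/\bigl(f(\emptyset)-f(A^*)\bigr)$, which is what you actually prove and what the surrounding text (``at least $1-1/e$ close to the optimal cost'') intends. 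Second, the lemma statement suppresses the cardinality budget entirely; your proof makes it explicit that the $1/e$ constant comes from running greedy for $k$ steps against an optimum of size at most $k$, and the step $|A^*\setminus A_i|\le k$ is where that hypothesis enters --- without some such constraint the unconstrained minimizer of a decreasing function is the full ground set and the statement degenerates. These are clarifications of the statement rather than gaps in your argument.
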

	In general, the edge addition problem is not supermodular and hence strong theoretical guarantees are not permissible. The following theorem lays a restrictive condition under which the set function $f(\hmcE)=\text{Tr}(\tW\tL^{-1})$ is a supermodular decreasing function of edge addition.
	\begin{thm}\label{supermodular_edg}
		The function $\text{Tr}(\tW\tL^{-1})$ is decreasing and supermodular in the addition of susceptance weighted edges to set ${\mathcal E_e} \subset \hmcE$ if 
		\begin{align}
		\left([\tL^{-1}_e]_{ik} - [\tL^{-1}_e]_{il}\right) - \left([\tL^{-1}_e]_{jk} - [\tL^{-1}_e]_{jl}\right) &> 0 \nonumber\\
		\left([\tL^{-1}_e]_{mk} - [\tL^{-1}_e]_{ml}\right) - \left([\tL^{-1}_e]_{nk} - [\tL^{-1}_e]_{nl}\right) &> 0 \nonumber\\
		\left([\tL^{-1}_e]_{im} - [\tL^{-1}_e]_{in}\right) - \left([\tL^{-1}_e]_{jm} - [\tL^{-1}_e]_{jn}\right) &> 0 \nonumber
		\end{align}
		where $(k,l)$ is an edge in $\tW$  ($w_{kl} >0$) while $(i,j)$ and $(m,n)$ are edges added to ${\tL_e}$. 
	\end{thm}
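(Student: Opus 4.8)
\emph{Proof strategy.} The plan is to treat the two assertions separately.

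First I would dispose of monotonicity, which uses none of the three inequalities. For $\mcE_e\subseteq\mcE'\subseteq\hmcE$ the reduced Laplacian of $\mcE'$ equals $\tL_e+\sum_{e\in\mcE'\setminus\mcE_e}b_e a_e a_e^\top$, which dominates $\tL_e$ in the Loewner order, and $\tL_e\succ 0$ because $\mcG_e$ is connected. Since matrix inversion reverses the Loewner order on positive-definite matrices, the inverse of the larger Laplacian is $\preceq\tL_e^{-1}$, and pairing the difference with $\tW\succeq 0$ gives $\text{Tr}(\tW\cdot)$ of it $\ge 0$. Hence $f(\mcE')\le f(\mcE_e)$, i.e. $f$ is decreasing.

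For supermodularity I would invoke the pairwise second-difference characterization ($f$ is supermodular iff $f(S\cup\{a\})+f(S\cup\{b\})\le f(S)+f(S\cup\{a,b\})$ for all $S$ and $a\neq b\notin S$), so it is enough to show, for the two edges $e_1=(i,j)$ and $e_2=(m,n)$ added to $\mcE_e$,
\[
f(\mcE_e\cup\{e_1,e_2\})-f(\mcE_e\cup\{e_1\})-f(\mcE_e\cup\{e_2\})+f(\mcE_e)\ \ge\ 0 .
\]
The idea is to write this as a double integral. Set $\tL(t_1,t_2):=\tL_e+t_1 b_{ij}a_{ij}a_{ij}^\top+t_2 b_{mn}a_{mn}a_{mn}^\top$, which stays $\succeq\tL_e\succ 0$ for $(t_1,t_2)\in[0,1]^2$, and $g(t_1,t_2):=\text{Tr}\bigl(\tW\,\tL(t_1,t_2)^{-1}\bigr)$. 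Then the left-hand side above equals $\int_0^1\!\!\int_0^1 \partial_{t_1}\partial_{t_2}g\,dt_1\,dt_2$, so it suffices to prove $\partial_{t_1}\partial_{t_2}g\ge 0$ on the unit square. Differentiating $\tL^{-1}$ via $\partial_t\tL^{-1}=-\tL^{-1}(\partial_t\tL)\tL^{-1}$ twice and using cyclicity of the trace should give, with $\tL:=\tL(t_1,t_2)$,
\[
\partial_{t_1}\partial_{t_2}g\;=\;2\,b_{ij}b_{mn}\,\bigl(a_{ij}^\top\tL^{-1}a_{mn}\bigr)\bigl(a_{ij}^\top\tL^{-1}\tW\tL^{-1}a_{mn}\bigr).
\]
Next I would expand $\tW=\sum_{(k,l)}w_{kl}a_{kl}a_{kl}^\top$, turning the second factor into $\sum_{(k,l)}w_{kl}\bigl(a_{ij}^\top\tL^{-1}a_{kl}\bigr)\bigl(a_{mn}^\top\tL^{-1}a_{kl}\bigr)$. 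Because $a_{ij}^\top\tL^{-1}a_{kl}=\bigl([\tL^{-1}]_{ik}-[\tL^{-1}]_{il}\bigr)-\bigl([\tL^{-1}]_{jk}-[\tL^{-1}]_{jl}\bigr)$, and analogously for the other bilinear forms, the three stated hypotheses say exactly that $a_{ij}^\top\tL_e^{-1}a_{kl}>0$, $a_{mn}^\top\tL_e^{-1}a_{kl}>0$ (over the edges $(k,l)$ of $\mcG_w$) and $a_{ij}^\top\tL_e^{-1}a_{mn}>0$. If the same sign pattern holds with $\tL_e$ replaced by $\tL(t_1,t_2)$ throughout $[0,1]^2$, every factor above is positive, hence $\partial_{t_1}\partial_{t_2}g\ge 0$, the double integral is nonnegative, and the second-difference inequality follows; the supermodular property over the whole sublattice above $\mcE_e$ is then obtained by repeating the argument for each pair of candidate edges and each intermediate base set.

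The main obstacle is precisely propagating the three sign conditions from the base $\tL_e$ to the intermediate Laplacians $\tL(t_1,t_2)$, since the hypotheses are stated only at $\tL_e$. I would handle this by tracking each bilinear form along the homotopy: on the region of $[0,1]^2$ where all the relevant forms are positive, $\partial_{t_1}\bigl(a_{ij}^\top\tL^{-1}a_{kl}\bigr)=-b_{ij}\bigl(a_{ij}^\top\tL^{-1}a_{ij}\bigr)\bigl(a_{ij}^\top\tL^{-1}a_{kl}\bigr)$ (and similarly in $t_2$), so $\partial_{t_1}\log\bigl(a_{ij}^\top\tL^{-1}a_{kl}\bigr)$ is a \emph{bounded} negative quantity there — bounded because $\tL^{-1}\preceq\tL_e^{-1}$ on the square — and likewise for $t_2$; hence each logarithm can only decrease by a bounded amount over the unit square, so no form reaches zero and the sign pattern is preserved. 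Making this bookkeeping fully rigorous (the coupling among the forms and the behaviour at the boundary of the positivity region) is the step that needs the most care. A more elementary alternative is to bypass the homotopy and expand the four-term difference by two applications of Sherman--Morrison; the same three inequalities reappear as sign conditions on the resulting scalar, but closing that computation seems to require an additional estimate on the entries of $\tL_e^{-1}$, which is why I would favour the integral route. Either way, it is the three displayed inequalities that force the mixed second difference to be nonnegative.
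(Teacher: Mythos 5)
Your core computation coincides with the paper's. The paper proves monotonicity from the (negative) first-order Neumann term and then establishes the condition by computing the mixed derivative $\frac{\delta^2 f}{\delta b_{ij}\delta b_{mn}}$, factoring it as $(a_{mn}^{\top}\tL_e^{-1}\tW\tL_e^{-1}a_{ij})(a_{mn}^{\top}\tL_e^{-1}a_{ij})$ (your formula $2b_{ij}b_{mn}(a_{ij}^{\top}\tL^{-1}a_{mn})(a_{ij}^{\top}\tL^{-1}\tW\tL^{-1}a_{mn})$ is the same object up to the positive prefactor), and expanding both bilinear forms entrywise so that the three hypotheses are exactly the sufficient sign conditions; your Loewner-order argument for the decrease is an immaterial variation. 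Where you go beyond the paper is the bridge from positivity of the continuous mixed partial to the discrete second-difference inequality via the double integral over $[0,1]^2$: the paper does not take this step at all, it verifies the mixed partial only at $\tL_e$ and asserts supermodularity from "strict convexity," so your write-up already reproduces everything the paper actually establishes.

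However, the extra step you attempt is not closed, and your proposed repair fails. The identity $\partial_{t_1}\log\bigl(a_{ij}^{\top}\tL^{-1}a_{kl}\bigr)=-b_{ij}\,a_{ij}^{\top}\tL^{-1}a_{ij}$ indeed gives a bounded log-derivative, but only in the direction of the form's own edge. In the cross direction, $\partial_{t_2}\bigl(a_{ij}^{\top}\tL^{-1}a_{kl}\bigr)=-b_{mn}\bigl(a_{ij}^{\top}\tL^{-1}a_{mn}\bigr)\bigl(a_{mn}^{\top}\tL^{-1}a_{kl}\bigr)$ is not proportional to $a_{ij}^{\top}\tL^{-1}a_{kl}$, so the corresponding log-derivative is unbounded as the form approaches zero; Sherman--Morrison shows the form changes by a subtractive, not multiplicative, amount along $t_2$ and can in principle cross zero inside the square. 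Hence, from hypotheses stated only at $\tL_e$, your argument does not yet yield $\partial_{t_1}\partial_{t_2}g\geq 0$ on all of $[0,1]^2$; you would need the three inequalities to hold at the intermediate Laplacians (equivalently, at every edge set between $\mcE_e$ and the augmented sets), or a genuinely different monotonicity argument for the bilinear forms. In fairness, the paper's own proof never addresses this propagation either — so treat this as a gap shared with the original rather than a defect unique to your approach, but as written your proof is incomplete at exactly the point you identified.
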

		\begin{proof}
			Let $\mcS = \{(i,j),(m,n)\}$ such that  $\mcS \not\in {\mathcal{E}_e}$. Define the function $f({\mathcal{E}_e}) = \text{Tr}(\tW\tL^{-1}_e)$ and the positive semi-definite matrix $\Delta =\sum_{r \in \mcS} b_r{a}_ra_r^{\top}$. The Neumann series expansion of $f({\mathcal{E}_e} \cup \{{\mathcal{S}}\})$ is given by
			\begin{subequations}
				\begin{align}
				\text{Tr}(\tW(\tL_e+\Delta)^{-1}) = \text{Tr}(\tW\tL_e^{-1}) -\text{Tr}(\tW\tL_e^{-1}\Delta\tL_e^{-1})+\nonumber\\
				\text{Tr}(\tW\tL_e^{-1}\Delta\tL_e^{-1}\Delta\tL_e^{-1}) + \text{higher order terms}\nonumber
				\end{align}
			\end{subequations}
			Note that the first-order term is negative for all $b_r \geq 0$ and hence the function is decreasing. To prove that $f(\hmcE)=\text{Tr}(\tW\tL^{-1})$ is supermodular, we find conditions under which the function is strictly convex. For this, the second derivative of $f(\hmcE)$ with respect to $(b_{ij}, b_{mn})$ must be positive
			\begin{equation}
			\small
			\frac{\delta^2 f}{\delta b_{ij}\delta b_{mn}} > 0
			\Rightarrow (a^{\top}_{mn}\tL_e^{-1}\tW\tL_e^{-1}a_{ij})(a^{\top}_{mn}\tL_e^{-1}a_{ij}) > 0 \label{doublederiv}
			\end{equation}
			A sufficient condition for positivity is when both terms on the LHS of \eqref{doublederiv} are positive. Expanding them we have
			\begin{subequations}
				\begin{align}
				&a^{\top}_{mn}\tL_e^{-1}\tW\tL_e^{-1}a_{ij}\nonumber\\
				&= \sum_{w_{kl} > 0}w_{kl}\left([\tL^{-1}_e]_{ik} - [\tL^{-1}_e]_{il} - [\tL^{-1}_e]_{jk}+[\tL^{-1}_e]_{jl}\right)\nonumber\\&\left([\tL^{-1}_e]_{mk} - [\tL^{-1}_e]_{ml} - [\tL^{-1}_e]_{nk}+[\tL^{-1}_e]_{nl}\right) \text{~~and}\nonumber\\
				&a^{\top}_{mn}\tL_e^{-1}a_{ij} = [\tL^{-1}_e]_{im} - [\tL^{-1}_e]_{in} - [\tL^{-1}_e]_{jm} + [\tL^{-1}_e]_{jn}. \nonumber
				\end{align}
			\end{subequations}
			This leads to the conditions for supermodularity.
		\end{proof}
	To the best of our knowledge, this is the first time explicit conditions for supermodularity have been provided for the topology design problem in power grids. Since these conditions are restrictive and do not hold in general, a greedy scheme is unlikely to perform well. 
	
	\section{Numerical Tests} \label{sec:nx}
	All tests were carried out on a 2.3 GHz Intel Dual-Core i5 laptop with 16GB RAM. The MILP formulations were solved in Julia/JuMP v0.6~\cite{jump} using Gurobi v8.1.1. We first tested the performance of the \emph{Tightened} MILP formulation for augmenting existing networks. For this design task, the IEEE $39$-bus system was used as the pre-existing connected network~\cite{39bus}. Set $\hmcE$ consisted of edges in this base network and an additional $22$ randomly placed lines. From these lines, we solved \eqref{eq:P4} for $K =\{5,6,7,8\}$. To satisfy Assumption~\ref{as:A1}, we assumed $M_i=10^{-4}$ on all nodes that did not host generators, and $D_i=c=0.025$ for all $i \in \mcV$. Table~\ref{table_1} compares the computation time of the MILP formulation in \cite{BDHK18} with the \emph{Tightened} MILP formulation discussed in this paper. On average, the run-time required to find the optimal solution to the \emph{Tightened} MILP \textit{decreased} by $61\%$.
	

	We next considered the radial topology design problem with $\hmcE$ composed of all edges in the IEEE $39$-bus network. To generate the eigenvector-based cuts we utilized a threshold value of $\gamma=-0.95$ and set the sparsity parameter $k$ to one. Compared to the MILP formulation presented in \cite{BDHK18} that required $8,034$ sec. to reach the optimal solution, the \emph{Tightened} MILP formulation with \eqref{eq:LPs:c2}-\eqref{eq:LPs:c3} was solved in $7,361$ sec. Augmenting the previous formulation with the eigenvector-based cuts reduced the run-time further to $4620$ sec. The overall reduction in computation time was $40\%$ - a non-trivial improvement in comparison to state-of-the-art methods. Similarly, we considered the optimal design of a meshed network with $39$ edges, where $\hmcE$ was composed of all edges in the  $39$-bus network. The optimal solution in this case was found in $9$ hours. Longer run-times for the latter task can be attributed to looser bounds on the entries of $X$. 

	\section{Conclusions}
	\label{sec:conc}
	We have presented tight mathematical formulations and numerous valid cutting planes that can substantially speed up the computational time required to find an optimal topology design (radial or meshed) for enhanced power system stability. 
	\begin{table}[t]
		\centering
		\small
		\caption{Comparison of computation Times}
		\begin{tabular}{cccc}
			\toprule
			Budget($K$) & MILP in \cite{BDHK18} (sec.) & \emph{Tightened} MILP (sec.) \\
			\cmidrule{1-3}
			$5$ &   $362$  & $149$  \\
			$6$  &  $545$  &$208$   \\
			$7$ &  $837$  &$301$     \\
			$8$ &  $936$ &$385$   \\
			\bottomrule
		\end{tabular}
		\vspace{-0.5cm}
		\label{table_1}
	\end{table}
	\bibliographystyle{IEEEtran}
	\bibliography{myabrv,references}

\begin{thebibliography}{10}
\providecommand{\url}[1]{#1}
\csname url@samestyle\endcsname
\providecommand{\newblock}{\relax}
\providecommand{\bibinfo}[2]{#2}
\providecommand{\BIBentrySTDinterwordspacing}{\spaceskip=0pt\relax}
\providecommand{\BIBentryALTinterwordstretchfactor}{4}
\providecommand{\BIBentryALTinterwordspacing}{\spaceskip=\fontdimen2\font plus
\BIBentryALTinterwordstretchfactor\fontdimen3\font minus
  \fontdimen4\font\relax}
\providecommand{\BIBforeignlanguage}[2]{{%
\expandafter\ifx\csname l@#1\endcsname\relax
\typeout{** WARNING: IEEEtran.bst: No hyphenation pattern has been}%
\typeout{** loaded for the language `#1'. Using the pattern for}%
\typeout{** the default language instead.}%
\else
\language=\csname l@#1\endcsname
\fi
#2}}
\providecommand{\BIBdecl}{\relax}
\BIBdecl

\bibitem{gayme}
E.~Tegling, B.~Bamieh, and D.~F. Gayme, ``The price of synchrony: Evaluating
  the resistive losses in synchronizing power networks,'' \emph{{IEEE} Trans.
  Control of Network Systems}, vol.~2, no.~3, pp. 254--266, 2015.

\bibitem{ulbig2014impact}
A.~Ulbig, T.~S. Borsche, and G.~Andersson, ``Impact of low rotational inertia
  on power system stability and operation,'' \emph{IFAC Proceedings Volumes},
  vol.~47, no.~3, pp. 7290--7297, 2014.

\bibitem{Poolla17}
B.~K. Poolla, S.~Bolognani, and F.~D{\"o}rfler, ``Optimal placement of virtual
  inertia in power grids,'' \emph{{IEEE} Trans. Automat. Contr.}, vol.~62,
  no.~12, pp. 6209--6220, 2017.

\bibitem{GuoLow18}
L.~{Guo}, C.~{Zhao}, and S.~H. {Low}, ``Graph laplacian spectrum and primary
  frequency regulation,'' in \emph{Proc. {IEEE} Conf. on Decision and Control},
  Miami Beach, FL, Dec. 2018.

\bibitem{mallada}
E.~Mallada and A.~Tang, ``Improving damping of power networks: Power scheduling
  and impedance adaptation,'' in \emph{Proc. {IEEE} Conf. on Decision and
  Control}, Orlando, FL, Dec. 2011.

\bibitem{SDPvoltage}
M.~Fardad, F.~Lin, and M.~R. Jovanovi{\'c}, ``Design of optimal sparse
  interconnection graphs for synchronization of oscillator networks,''
  \emph{{IEEE} Trans. Automat. Contr.}, vol.~59, no.~9, pp. 2457--2462, 2014.

\bibitem{MM19}
M.~Mahdavi, C.~Sabillon~Antunez, M.~Ajalli, and R.~Romero, ``Transmission
  expansion planning: Literature review and classification,'' \emph{{IEEE}
  Systems Journal}, vol.~13, no.~3, pp. 3129--3140, 2019.

\bibitem{KH2011}
K.~W. Hedman, S.~S. Oren, and R.~P. O'Neill, ``A review of transmission
  switching and network topology optimization,'' in \emph{Proc. {IEEE} PES
  General Meeting}, Detroit, MI, Jul. 2011.

\bibitem{Deka18ACC}
D.~Deka, H.~Nagarajan, and S.~Backhaus, ``Optimal topology design for
  disturbance minimization in power grids,'' in \emph{Proc. {IEEE} American
  Control Conf.}, Seattle, WA, May 2017.

\bibitem{BDHK18}
S.~Bhela, D.~Deka, H.~Nagarajan, and V.~Kekatos, ``Designing power grid
  topologies for minimizing network disturbances: An exact {MILP}
  formulation,'' in \emph{Proc. {IEEE} American Control Conf.}, Philadelphia,
  PA, Jul. 2019.

\bibitem{kundur}
P.~Kundur, \emph{Power system stability and control}.\hskip 1em plus 0.5em
  minus 0.4em\relax New York, NY: McGraw-Hill, 1994.

\bibitem{john}
E.~Tegling, M.~Andreasson, J.~W. Simpson-Porco, and H.~Sandberg, ``Improving
  performance of droop-controlled microgrids through distributed pi-control,''
  in \emph{Proc. {IEEE} American Control Conf.}, Boston, MA, Jul. 2016.

\bibitem{boyd1991linear}
S.~P. Boyd and C.~Barratt, \emph{Linear Controller Design: Limits of
  Performance}.\hskip 1em plus 0.5em minus 0.4em\relax Upper Saddle River, NJ:
  Prentice-Hall, 1991.

\bibitem{mccormick1976computability}
G.~P. McCormick, ``Computability of global solutions to factorable nonconvex
  programs: Part {I-} convex underestimating problems,'' \emph{Mathematical
  programming}, vol.~10, no.~1, pp. 147--175, 1976.

\bibitem{ellen2011}
W.~Ellens, F.~Spieksma, P.~V. Mieghem, A.~Jamakovic, and R.~Kooij, ``Effective
  graph resistance,'' \emph{Linear Algebra and its Applications}, vol. 435,
  no.~10, pp. 2491 -- 2506, 2011.

\bibitem{floyd62}
R.~W. Floyd, ``Algorithm 97: Shortest path,'' \emph{Commun. ACM}, vol.~5,
  no.~6, p. 345, 1962.

\bibitem{johnson82}
C.~R. Johnson, ``Inverse {M}-matrices,'' \emph{Linear Algebra and its
  Applications}, vol.~47, pp. 195 -- 216, 1982.

\bibitem{BoVa04}
S.~Boyd and L.~Vandenberghe, \emph{Convex Optimization}.\hskip 1em plus 0.5em
  minus 0.4em\relax New York, NY: Cambridge University Press, 2004.

\bibitem{Kron2013}
F.~Dorfler and F.~Bullo, ``Kron reduction of graphs with applications to
  electrical networks,'' \emph{{IEEE} Transactions on Circuits and Systems I:
  Regular Papers}, vol.~60, no.~1, pp. 150--163, 2013.

\bibitem{submodulargreedy}
G.~L. Nemhauser, L.~A. Wolsey, and M.~L. Fisher, ``An analysis of
  approximations for maximizing submodular set functions—{I},''
  \emph{Mathematical Programming}, vol.~14, no.~1, pp. 265--294, 1978.

\bibitem{jump}
I.~Dunning, J.~Huchette, and M.~Lubin, ``Ju{MP}: A modeling language for
  mathematical optimization,'' \emph{SIAM Review}, vol.~59, no.~2, pp.
  295--320, 2017.

\bibitem{39bus}
A.~Pai, \emph{Energy function analysis for power system stability}.\hskip 1em
  plus 0.5em minus 0.4em\relax New York, NY: Springer Science \& Business
  Media, 2012.

\end{thebibliography}
	
\end{document}